\documentclass[11pt]{amsart}
\usepackage[T1]{fontenc}
\usepackage[utf8]{inputenc}
\usepackage{amsmath,amssymb,amsthm}
\usepackage{mathtools}
\usepackage{booktabs}
\usepackage[margin=1.1in]{geometry}
\usepackage{graphicx}
\usepackage{pgfplots}
\pgfplotsset{compat=1.18}
\usepgfplotslibrary{fillbetween}
\usepackage{hyperref}

\DeclareMathOperator{\rad}{rad}

\theoremstyle{plain}
\newtheorem{theorem}{Theorem}[section]
\newtheorem{proposition}[theorem]{Proposition}
\newtheorem{lemma}[theorem]{Lemma}
\newtheorem{corollary}[theorem]{Corollary}
\newtheorem{conjecture}[theorem]{Conjecture}

\theoremstyle{definition}
\newtheorem{definition}[theorem]{Definition}
\newtheorem{example}[theorem]{Example}
\newtheorem{objective}[theorem]{Objective}

\theoremstyle{remark}
\newtheorem{remark}[theorem]{Remark}

\numberwithin{equation}{section}

\begin{document}

\title{The quality plane of $abc$-triples and the decomposition into gains}

\author{R. Laniewski}
\author{K. M\"uller}

\date{October 5, 2026}

\subjclass[2020]{Primary 11D75; Secondary 11G05, 11J68, 11Y50}

\keywords{$abc$ conjecture, Szpiro conjecture, Frey curve, quality of $abc$-triples, approximation gain, power gain}

\begin{abstract}\label{abs:quality-plane}
To a coprime triple $a+b=c$ we attach two coordinates, the quality $X$ and the logarithmic mass $Y$, whose ratio $\lambda$ measures the balance of the triple and lies asymptotically between $2$ and $3$. In these coordinates the $abc$ conjecture and Szpiro's conjecture for Frey curves become asymptotic bounds with thresholds $X=1$ and $Y=3$, and the direct implications between them follow from the range of $\lambda$. We then place the approximation and power gains of M\"uller, Taktikos and de Weger in the same plane. Relative to a chosen presentation of the triple, one exponent factor $f$ relates the gains to the coordinates, and Szpiro's conjecture for this family is equivalent to the asymptotic bound $f\le 3$ uniformly over all presentations. Under the two separate asymptotic gain bounds for a fixed choice of presentations, we obtain $\limsup X\le 9/2$ and $\limsup Y\le 27/2$ as the radical tends to infinity. Stronger conclusions depend on the presentation or on a joint bound for the gains. We close with numerical data and two research objectives.
\end{abstract}

\maketitle

\section{Introduction}\label{sec:intro}

The quality of a coprime triple $a+b=c$ is the ratio $\log c/\log\rad(abc)$. The $abc$ conjecture of Masser and Oesterl\'e asserts that, for every $\varepsilon>0$, only finitely many triples have quality above $1+\varepsilon$~\cite{Oesterle1988Fermat}. The same triples also give rise to elliptic curves. The Frey curve $y^2=x(x-a)(x+b)$ has a discriminant governed by $abc$ and a conductor governed by $\rad(abc)$~\cite{Frey1986}, so that Szpiro's conjecture, which bounds the discriminant of an elliptic curve by a power of its conductor~\cite{Szpiro1990}, becomes a statement about the triple.

More recently, M\"uller and Taktikos proposed to factor the quality into two parts, an approximation gain and a power gain, for triples arising from convergents of surds, and they conjectured that the approximation gain stays below $3/2$~\cite{MullerTaktikos2026}. De Weger extended the construction to all $abc$-triples, in several variants~\cite{deWeger2026}. In the computations of de Weger, the real approximation gain stays below $3/2$ and the real power gain below $3$. Related conditional estimates appear in~\cite{Muller2026Superelliptic}.

This note places these quantities in a single plane. The quality $X$ serves as the horizontal coordinate, and the logarithmic mass $Y=\log(abc)/\log\rad(abc)$, which we also call the weak quality, as the vertical one. Their ratio $\lambda=Y/X$ depends only on the relative sizes of $a$, $b$ and $c$, and it lies asymptotically between $2$ and $3$ (Lemma~\ref{lem:lambda}). In these coordinates the $abc$ conjecture and Szpiro's conjecture for Frey curves become two thresholds, $X=1$ and $Y=3$, and the range of $\lambda$ gives the implications between them (Proposition~\ref{prop:implications}).

The gains depend on how the triple is written as a product of powers, and we make this choice explicit through the notion of a presentation (Definition~\ref{def:presentation}). Once a presentation is fixed, a single exponent factor $f$ relates the two gains to the two coordinates. The power gain is at least $1$, and Szpiro's conjecture can then be read as a bound on this exponent factor (Corollary~\ref{cor:fbound}).

We then examine the conjectured bounds on the two gains. Taken separately, they bound a product through its factors, and the resulting estimate for the quality is $9/2$, well above the threshold of the $abc$ conjecture (Proposition~\ref{prop:gainbounds}). How much more they give depends on the presentation, and on whether the two gains can be bounded jointly. Section~\ref{sec:data} collects numerical data, including computations of de Weger on the weak quality, and Section~\ref{sec:objectives} states two research objectives.

\section{Two coordinates}\label{sec:coordinates}

Throughout, a \emph{triple} is a triple $(a,b,c)$ of coprime positive integers with $a+b=c$ and $a<b$. Following the tables of de Smit~\cite{deSmitABC}, we call it an \emph{$abc$-triple} when its quality exceeds $1$.

\begin{definition}\label{def:coordinates}
For a triple $(a,b,c)$, put $N=\rad(abc)$ and define
\[
X=\frac{\log c}{\log N},\qquad Y=\frac{\log(abc)}{\log N},\qquad \lambda=\frac{\log(abc)}{\log c}.
\]
We call $X$ the \emph{quality}, $Y$ the \emph{logarithmic mass}, and $\lambda$ the \emph{balance} of the triple.
\end{definition}

The logarithms in the denominators are positive. Indeed $b$ and $c$ are coprime and larger than $1$, so that $N$ has at least two prime factors, and one of $a$, $b$, $c$ is even. Hence $N\ge 6$ and $c\ge 3$.

Only finitely many triples have a given radical, since the corresponding $S$-unit equation has finitely many solutions~\cite{Mahler1933,SilvermanAEC}. For a real quantity $Z$ attached to triples, we may therefore write
\[
\limsup_{N\to\infty}Z=\lim_{M\to\infty}\,\sup\{Z(a,b,c) : \rad(abc)\ge M\}
\]
with values in $[-\infty,+\infty]$, and similarly for the lower limit. A statement of the form \emph{for every $\varepsilon>0$, there are only finitely many triples with $Z>\kappa+\varepsilon$} is then equivalent to $\limsup_{N\to\infty}Z\le\kappa$. Since $N\le abc<c^3$, the integer $c$ tends to infinity with $N$.

The three quantities satisfy
\begin{equation}\label{eq:YlambdaX}
Y=\lambda X.
\end{equation}
We also refer to the mass $Y$ as the \emph{weak quality} of the triple. It coincides with the ratio $\rho$ used in the search for triples that are extreme for Szpiro's conjecture~\cite{Nitaj1993,Dokchitser2004}.

\begin{lemma}\label{lem:lambda}
Let $(a,b,c)$ be a triple. If $a\ge 2$, then $2<\lambda<3$. If $a=1$, then
\[
2-\frac{1}{(c-1)\log c}<\lambda<2.
\]
In particular, $2\le\liminf_{N\to\infty}\lambda\le\limsup_{N\to\infty}\lambda\le 3$. Moreover, every point of the interval $[2,3]$ is a limit of $\lambda$ along some sequence of distinct triples.
\end{lemma}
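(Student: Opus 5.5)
The inequalities all come from writing $\lambda=\log(abc)/\log c = 2+\log(ab/c)/\log c$ and comparing $ab$ with $c=a+b$.

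\emph{Case $a\ge 2$.} Since $a<b$, we have $b\ge 3$. Then $ab-(a+b)=(a-1)(b-1)-1\ge 1\cdot 2-1>0$, so $ab>c$ and $\lambda>2$. For the upper bound, $ab<c^2$ because $a,b<c$, so $\lambda<3$.

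\emph{Case $a=1$.} Here $ab=b=c-1<c$, which gives $\lambda<2$. For the lower bound,
\[
\log(ab/c)=\log(1-1/c)>-\frac{1}{c-1},
\]
using $\log(1-t)\ge -t/(1-t)$ with $t=1/c$. Dividing by $\log c$ yields $\lambda>2-1/((c-1)\log c)$.

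\emph{Limits.} Since $c\to\infty$ as $N\to\infty$ (noted before the lemma), the correction term $1/((c-1)\log c)$ tends to $0$. Combining the two cases gives $\liminf_{N\to\infty}\lambda\ge 2$, and $\lambda<3$ always holds, so $\limsup_{N\to\infty}\lambda\le 3$.

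\emph{Density in $[2,3]$.} This is the only real step. I would target $\mu\in[2,3]$ and set $\theta=\mu-2\in[0,1]$. The idea is to choose coprime $a<b$ with $\log a/\log b\to\theta$. Then $c\sim b$ or $c\le 2b$, so
\[
\lambda=\frac{\log a+\log b+\log c}{\log c}\to \theta+2=\mu.
\]
Concretely, for $\theta\in(0,1)$, take $b=p$ prime large and $a=\lfloor p^{\theta}\rfloor$. Coprimality is automatic since $a<p$. Also $\log a=\theta\log p+O(1)$ and $\log c=\log p+O(1/p^{1-\theta})$, so $\lambda\to 2+\theta$.

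The endpoints need separate choices:
\begin{itemize}
\item For $\theta=0$, use $a=1$ or $a=2$ with $b$ odd tending to infinity.
\item For $\theta=1$, use $a=n$, $b=n+1$. Then $\log(abc)/\log c\to 3$, since $c=2n+1$ and each logarithm is $\log n+O(1)$.
\end{itemize}

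Distinct $b$ give distinct triples. Since $c\to\infty$ and only finitely many triples have a given radical, $N\to\infty$ along these sequences as well, although the statement only asks for distinct triples. The only care needed is verifying the $O(1)$ error terms, and these are straightforward.
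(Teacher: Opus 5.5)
Your proof is correct and follows essentially the paper's route: the same comparisons $ab>c$ for $a\ge 2$, $ab<c^2$ and $-\log(1-1/c)<1/(c-1)$, and the same density construction with one term a large prime and $a$ roughly that prime to the power $\theta$. The differences are minor: you put the prime in $b$ rather than in $c$ and treat $\theta=0$ separately, whereas the paper's choice $a=\lceil c^{\alpha}\rceil$ already gives $a=1$ at $\alpha=0$; and your strict lower bound needs $\log(1-t)>-t/(1-t)$ strictly for $0<t<1$ (true since $\log(1+x)<x$ with $x=t/(1-t)$), so state that inequality with $>$ rather than $\ge$.
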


\begin{proof}
We have $\lambda=1+\log(ab)/\log c$. Since $a<c$ and $b<c$, the product $ab$ is smaller than $c^2$, whence $\lambda<3$. If $a\ge2$, then $ab\ge 2b>a+b=c$, whence $\lambda>2$. If $a=1$, then $b\ge 2$ and $c\ge 3$, and $ab=c-1$, so that
\[
\lambda=2+\frac{\log(1-1/c)}{\log c}
\]
and the bound follows from $-\log(1-1/c)<1/(c-1)$. The bounds on the lower and upper limits follow, since $c$ tends to infinity with $N$.

For the last assertion, let $c$ run through the odd primes. For $0\le\alpha<1$, put $a=\lceil c^\alpha\rceil$ and $b=c-a$. Then $a<b$ for $c$ large, the triple is coprime because $c$ is prime, and
\[
\lambda=1+\frac{\log a+\log(c-a)}{\log c}
\]
tends to $2+\alpha$. For $\alpha=1$, put $a=(c-1)/2$ and $b=(c+1)/2$, for which $\lambda$ tends to $3$.
\end{proof}

The lemma also gives a bound valid for every triple. When $a=1$, the balance
\[
\lambda=2-\frac{\log\bigl(c/(c-1)\bigr)}{\log c}
\]
increases with $c$, since the numerator decreases and the denominator increases. Every triple therefore satisfies
\[
\frac{\log 6}{\log 3}\le\lambda<3
\]
with equality on the left only for $1+2=3$, where $\lambda=1.6309\ldots$.

The balance admits a more descriptive form. Since $c/2<b<c$, one has
\begin{equation}\label{eq:balance-scale}
\lambda=2+\frac{\log a}{\log c}+\frac{\log(b/c)}{\log c}\qquad\text{with}\qquad -\frac{\log 2}{\log c}<\frac{\log(b/c)}{\log c}<0.
\end{equation}
Along a sequence with $c\to\infty$, the balance tends to $2$ precisely when $\log a/\log c\to 0$, and to $3$ precisely when $\log a/\log c\to 1$.

\section{The Frey curve and the two conjectures}\label{sec:conjectures}

\begin{conjecture}[$abc$]\label{conj:abc}
For every $\varepsilon>0$, there are only finitely many triples with $X>1+\varepsilon$.
\end{conjecture}

To a triple we attach the Frey curve
\[
E_{a,b,c}\colon y^2=x(x-a)(x+b).
\]
Let $\Delta_E$ denote its minimal discriminant and $N_E$ its conductor. The curve is semistable away from $2$, and one has
\begin{equation}\label{eq:freyinvariants}
|\Delta_E|=2^s(abc)^2\qquad\text{and}\qquad N_E=2^t\rad(abc)
\end{equation}
with integers $-8\le s\le 4$ and $-1\le t\le 7$ (see~\cite{SilvermanAEC} and~\cite{Pasten2024Shimura}). The exponents $s$ and $t$ depend on the reduction of the curve at $2$, as determined by Tate's algorithm, and account for the passage from the model above, whose discriminant is $2^4(abc)^2$, to a minimal model at $2$. The model above is minimal at every odd prime, since $c_4=16(c^2-ab)$ is prime to each odd prime dividing $abc$. A minimal model is therefore reached by a change of variables with $u=2^k$ for an integer $k\ge 0$, which divides $c_4$ by $2^{4k}$ and the discriminant by $2^{12k}$. Since $a$ and $b$ are coprime, the integer $c^2-ab=a^2+ab+b^2$ is odd, so that $v_2(c_4)=4$. An integral model after the change of variables requires $4k\le v_2(c_4)$, hence $k\le 1$, and $s=4-12k$ takes only the values $4$ and $-8$. The \emph{Szpiro ratio} of the curve is $\sigma(E)=\log|\Delta_E|/\log N_E$. The lower value $t=-1$ does occur. For the triple $1+15=16$, the curve $E_{1,15,16}$ has good reduction at $2$ and conductor $15=\rad(240)/2$.

\begin{conjecture}[Szpiro, for Frey curves]\label{conj:szpiro}
For every $\varepsilon>0$, there are only finitely many triples with $\sigma(E_{a,b,c})>6+\varepsilon$.
\end{conjecture}

\begin{lemma}\label{lem:szpiroY}
Conjecture~\ref{conj:szpiro} holds if and only if, for every $\varepsilon>0$, there are only finitely many triples with $Y>3+\varepsilon$.
\end{lemma}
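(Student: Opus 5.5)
The plan is to write the Szpiro ratio $\sigma=\sigma(E_{a,b,c})$ in terms of $Y$ using~\eqref{eq:freyinvariants}, and then to check that the bounded powers of $2$ do not matter asymptotically. Put $L=\log N$ with $N=\rad(abc)$. By~\eqref{eq:freyinvariants},
\[
\sigma=\frac{s\log 2+2\log(abc)}{t\log 2+\log N}=\frac{2Y+s\log 2/L}{1+t\log 2/L},
\]
with $s\in\{4,-8\}$ and $-1\le t\le 7$. The denominator is positive, since $N_E\ge 1$; it is at least $1-\log 2/\log 6>0$. Since $s$ and $t$ are bounded, $\sigma-2Y=\bigl(s\log 2-2Yt\log 2\bigr)/(L+t\log 2)$. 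Hence, if $Y$ is bounded along a family, $\sigma-2Y\to0$ as $N\to\infty$. The comparison will therefore run through $\limsup$ statements, using the equivalence noted in Section~\ref{sec:coordinates}: finiteness for every $\varepsilon$ is the same as $\limsup_{N\to\infty}Z\le\kappa$. This equivalence relies on the finiteness of triples with a given radical.

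The key point is that $Y$ need not be bounded a priori, so I would avoid assuming it and instead use monotone inequalities. For the direction from $Y$ to $\sigma$, I would show $\sigma\le (2Y+4\log2/L)/(1-\log2/L)$ directly from $s\le4$ and $t\ge -1$. If $\limsup Y\le3$, this gives $\limsup\sigma\le 6$.

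For the converse, I would use $s\ge -8$ and $t\le 7$, which give $\sigma\ge(2Y-8\log 2/L)/(1+7\log2/L)$. Rearranging,
\[
2Y\le \sigma(1+7\log 2/L)+8\log2/L.
\]
If $\limsup\sigma\le 6$, then $\sigma$ is bounded for $N$ large, so the right side has $\limsup$ at most $6$. Hence $\limsup Y\le 3$. Converting both $\limsup$ statements back to finiteness statements finishes the proof.

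The main obstacle is just the careful handling of the sign and of the bounded exponents, so that no boundedness of $Y$ or $\sigma$ is assumed prematurely. I would also confirm that the denominator $1-\log2/L$ is positive, which holds since $L\ge\log6$. Everything else is routine.
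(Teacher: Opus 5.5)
Your proposal is correct and follows essentially the paper's route: you rewrite $\sigma(E_{a,b,c})$ through \eqref{eq:freyinvariants}, use the boundedness of $s$ and $t$ to compare $\limsup\sigma$ with $2\limsup Y$, and translate back through the convention of Section~\ref{sec:coordinates}. The paper instead writes the error multiplicatively as $\sigma/(2Y)\to 1$ (using $\log(abc)\ge\log N$), which gives $\limsup\sigma=2\limsup Y$ in $[0,+\infty]$ in one step and so removes your concern about unbounded $Y$; your one-sided bounds also work, provided that in the lower bound you justify replacing $t$ by $7$ in the denominator by noting that the numerator $\log|\Delta_E|$ is non-negative (or that $N\ge 16$).
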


\begin{proof}
By~\eqref{eq:freyinvariants}, with $N=\rad(abc)$,
\[
\sigma(E_{a,b,c})=2Y\cdot\frac{\log N}{\log N+t\log 2}+\frac{s\log 2}{\log N+t\log 2}.
\]
Since $2Y\log N=2\log(abc)$, this may be written as
\[
\frac{\sigma(E_{a,b,c})}{2Y}=\frac{\log N}{\log N+t\log 2}\left(1+\frac{s\log 2}{2\log(abc)}\right).
\]
Since $s$ and $t$ are bounded and $\log(abc)\ge\log N$, the right-hand side tends to $1$ as $N\to\infty$, that is $\sigma(E_{a,b,c})=2Y(1+o(1))$. Hence
\[
\limsup_{N\to\infty}\sigma(E_{a,b,c})=2\limsup_{N\to\infty}Y
\]
in $[0,+\infty]$. By the convention of Section~\ref{sec:coordinates}, Conjecture~\ref{conj:szpiro} is the bound $\limsup_{N\to\infty}\sigma(E_{a,b,c})\le 6$, and the assertion of the lemma is the bound $\limsup_{N\to\infty}Y\le 3$. The two are equivalent.
\end{proof}

In the plane with coordinates $(X,Y)$, Conjecture~\ref{conj:abc} places all but finitely many triples to the left of the vertical line $X=1+\varepsilon$ for every $\varepsilon>0$, and Conjecture~\ref{conj:szpiro} places them below the horizontal line $Y=3+\varepsilon$. We refer to $X=1$ and $Y=3$ as the two thresholds. The balance $\lambda$ relates the two.

\begin{proposition}\label{prop:implications}
\begin{enumerate}
\item[\textup{(i)}] Conjecture~\ref{conj:abc} implies Conjecture~\ref{conj:szpiro}.
\item[\textup{(ii)}] Conjecture~\ref{conj:szpiro} implies that, for every $\varepsilon>0$, there are only finitely many triples with $X>3/2+\varepsilon$.
\end{enumerate}
\end{proposition}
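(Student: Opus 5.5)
Both parts rest on the identity $Y=\lambda X$ from~\eqref{eq:YlambdaX}, on the bounds for $\lambda$ in Lemma~\ref{lem:lambda}, and on the reformulation of Conjecture~\ref{conj:szpiro} in Lemma~\ref{lem:szpiroY}. Using the limsup convention of Section~\ref{sec:coordinates}, we translate each conjecture into an asymptotic bound and compare the bounds through $\lambda$. Conjecture~\ref{conj:abc} becomes $\limsup_{N\to\infty}X\le 1$. By Lemma~\ref{lem:szpiroY}, Conjecture~\ref{conj:szpiro} becomes $\limsup_{N\to\infty}Y\le 3$.

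For (i), we use the bound $\lambda<3$, which holds for every triple by Lemma~\ref{lem:lambda}. It gives $Y=\lambda X<3X$ for every triple, because $X>0$ (recall $c\ge3$ and $N\ge6$). Fix $\varepsilon>0$. If $Y>3+\varepsilon$, then $X>1+\varepsilon/3$. Conjecture~\ref{conj:abc} applied with $\varepsilon/3$ leaves only finitely many such triples. Lemma~\ref{lem:szpiroY} then yields Conjecture~\ref{conj:szpiro}.

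For (ii), we need a lower bound on $\lambda$. The exact lower bound $\lambda\ge\log6/\log3$ is not sufficient. Instead we use the asymptotic one: $\lambda>2$ when $a\ge2$, and $\lambda>2-1/((c-1)\log c)$ when $a=1$. Fix $\varepsilon>0$, and suppose infinitely many triples have $X>3/2+\varepsilon$. Since only finitely many triples share a given radical, $N\to\infty$ along these triples, and hence $c\to\infty$. So for all but finitely many of them $\lambda>2-\delta$, with $\delta>0$ chosen so small that $(2-\delta)(3/2+\varepsilon)>3+\varepsilon$. Taking $\delta=\varepsilon/4$ works for $\varepsilon\le 1$, and the case of larger $\varepsilon$ follows from the case $\varepsilon=1$. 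For these triples $Y=\lambda X>3+\varepsilon$. This contradicts Conjecture~\ref{conj:szpiro} in the form of Lemma~\ref{lem:szpiroY}.

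The only point needing care is the $a=1$ case in (ii), where $\lambda$ lies slightly below $2$. The error term $1/((c-1)\log c)$ tends to zero, and the limsup formulation absorbs it once we note that $c\to\infty$ with $N$. Equivalently, one can argue with limits superior directly: $\limsup X\le \limsup Y/\liminf\lambda\le 3/2$.
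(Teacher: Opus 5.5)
Your proof is correct and follows the paper's argument. Both parts use $Y=\lambda X$, the bounds $\lambda<3$ and $\liminf_{N\to\infty}\lambda\ge 2$ from Lemma~\ref{lem:lambda}, and the reformulation in Lemma~\ref{lem:szpiroY}; the paper writes (ii) directly as $\limsup X\le\limsup Y/\liminf\lambda\le 3/2$, and your explicit choice $\delta=\varepsilon/4$ (valid because $(2-\varepsilon/4)(3/2+\varepsilon)>3+\varepsilon$ for $\varepsilon<5/2$) is that same estimate written out with $\varepsilon$ and $\delta$.
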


\begin{proof}
By~\eqref{eq:YlambdaX} and Lemma~\ref{lem:lambda}, one has $Y<3X$ for every triple, so that $\limsup Y\le 3\limsup X$ as $N\to\infty$. Conjecture~\ref{conj:abc} gives $\limsup X\le 1$, hence $\limsup Y\le 3$, and (i) follows from Lemma~\ref{lem:szpiroY}. For (ii), Lemma~\ref{lem:szpiroY} gives $\limsup Y\le 3$, and Lemma~\ref{lem:lambda} gives $\liminf\lambda\ge 2$. Since $X=Y/\lambda$ with $Y$ and $\lambda$ positive, one obtains
\[
\limsup_{N\to\infty}X\le\frac{\limsup_{N\to\infty}Y}{\liminf_{N\to\infty}\lambda}\le\frac32.
\]
\end{proof}

\begin{remark}\label{rem:classical}
Both implications in Proposition~\ref{prop:implications} are classical~\cite{Oesterle1988Fermat}. In the direct comparison, the constant $3/2$ in \textup{(ii)} is the quotient of the mass threshold $3$ by the lower limit $2$ of the balance, while the constant $3$ in \textup{(i)} is its upper limit. These constants arise from $Y=\lambda X$ and the limiting range of $\lambda$.

The full Szpiro conjecture over $\mathbb{Q}$ gives more, through a curve outside this family. The quotient of $E_{a,b,c}$ by the point $(0,0)$ is the curve
\[
E'\colon y^2=x\bigl(x^2-2(b-a)x+c^2\bigr)
\]
with discriminant $-2^8c^4ab$ and invariant $c_4(E')=16(a^2-14ab+b^2)=16(c^2-16ab)$. Isogenous curves have the same conductor, so $E'$ inherits from~\eqref{eq:freyinvariants} the bound $N_{E'}=N_E\le 2^7\rad(abc)$. The invariant $c_4(E')$ is prime to every odd prime dividing $abc$, so this model is minimal away from $2$. At $2$, the valuation $v_2(c_4(E'))$ equals $4$, $6$, $9$ or $8$ according as $v_2(c)$ is $0$, $1$, $2$ or at least $3$, so that a minimal model is reached with a scaling $u=2^k$ where $k\le 2$. Hence $|\Delta_{E'}|\ge 2^{-16}c^4ab>2^{-17}c^5$. Szpiro's conjecture for $E'$ with exponent $6+\eta$ then gives
\[
c^5\ll|\Delta_{E'}|\ll_\eta N_{E'}^{6+\eta}\ll\rad(abc)^{6+\eta}
\]
so that $c\ll_\eta\rad(abc)^{(6+\eta)/5}$. With $\varepsilon=\eta/5$ this reads $c\ll_\varepsilon\rad(abc)^{6/5+\varepsilon}$ for every $\varepsilon>0$, that is $\limsup X\le 6/5$. This consequence is due to Szpiro and appears in~\cite[\S~I.2, remark after Proposition~1, and \S~I.3]{Oesterle1988Fermat}. The \emph{modified} Szpiro conjecture, which bounds $\max\{|c_4|^3,c_6^2\}$ for a minimal model in place of $|\Delta_E|$, is equivalent to the $abc$ conjecture~\cite{Oesterle1988Fermat}.
\end{remark}

\section{Presentations and the gain decomposition}\label{sec:gains}

The gains of M\"uller and Taktikos are defined relative to a factorization of the triple into bases and exponents. We make this choice explicit.

\begin{definition}\label{def:presentation}
A \emph{presentation} of a triple $(a,b,c)$ is a positive integer $P$ with $P\mid abc$ and $\rad(P)=\rad(abc)$. Relative to a presentation $P$, the \emph{exponent factor}, the \emph{approximation gain} and the \emph{power gain} are
\[
f=\frac{\log(abc)}{\log P},\qquad G_a=\frac{\log c}{\log P},\qquad G_p=\frac{\log P}{\log N}
\]
where $N=\rad(abc)$.
\end{definition}

The definition admits the two extremes $P=N$ and $P=abc$, and presentations arising from the structure of a triple can reach both. The surd presentation of Example~\ref{ex:surd} for $1+3=4$, which arises from the convergent $2/1$ of $\sqrt3$ through $2^2-3\cdot1^2=1$, is $P=6=N$. The enhanced radical of de Weger equals $abc$ once the degree exceeds every exponent in $b$ and $c$ (Remark~\ref{rem:deWegerFamily}).

\begin{proposition}\label{prop:dictionary}
Let $P$ be a presentation of a triple. Then
\begin{equation}\label{eq:dictionary}
G_aG_p=X,\qquad fG_p=Y,\qquad \lambda G_a=f.
\end{equation}
Moreover,
\[
1\le f\le Y,\qquad 1\le G_p\le Y,\qquad \frac{1}{\lambda}\le G_a\le X.
\]
The presentation $P=abc$ gives $f=1$, $G_p=Y$ and $G_a=1/\lambda$. The presentation $P=N$ gives $f=Y$, $G_p=1$ and $G_a=X$.
\end{proposition}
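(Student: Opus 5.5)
The three identities in \eqref{eq:dictionary} are formal consequences of Definitions~\ref{def:coordinates} and~\ref{def:presentation}, and I would verify them first by cancelling logarithms. One has
\[
G_aG_p=\frac{\log c}{\log P}\cdot\frac{\log P}{\log N}=\frac{\log c}{\log N}=X,
\]
and in the same way $fG_p=\log(abc)/\log N=Y$ and $\lambda G_a=\log(abc)/\log P=f$. The only point to check is that $\log P>0$. This holds because $\rad(P)=N\ge 6$ by the remark after Definition~\ref{def:coordinates}, so $P\ge N\ge 6>1$.

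For the inequalities I would use the single chain $N\le P\le abc$. The left inequality holds because $\rad(P)=N$ and every integer is at least its radical. The right inequality holds because $P\mid abc$. Taking logarithms, which are all positive, gives $\log N\le\log P\le\log(abc)$. Dividing by $\log P$ yields $1\le f$ and $f\le \log(abc)/\log N=Y$. Dividing by $\log N$ yields $1\le G_p\le Y$.

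For $G_a$, I would combine the identity $G_a=f/\lambda$ with $1\le f\le Y$ and $Y=\lambda X$ from \eqref{eq:YlambdaX}. This gives $1/\lambda\le G_a\le Y/\lambda=X$, using $\lambda>0$, which holds by Lemma~\ref{lem:lambda} or the bound $\lambda\ge\log6/\log3$.

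The two extreme cases follow by substitution. The choice $P=abc$ is a presentation since $\rad(abc)=N$, and it gives $f=1$, $G_p=Y$ and $G_a=\log c/\log(abc)=1/\lambda$. The choice $P=N$ is also admissible since $N\mid abc$, and it gives $f=Y$, $G_p=1$ and $G_a=X$. No step is a real obstacle. The only care needed is the positivity of $\log P$ and the direction of the inequalities when dividing.
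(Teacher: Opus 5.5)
Your proposal is correct and follows the paper's proof: cancel logarithms for the identities, use the chain $N\le P\le abc$ for the bounds, and substitute at the two extreme presentations. Deriving the $G_a$ bounds through $G_a=f/\lambda$ and $Y=\lambda X$ is only a rearrangement of that same chain. Your explicit check that $\log P>0$ is a detail the paper leaves implicit.
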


\begin{proof}
The identities follow from the definitions by cancelling logarithms. Since $P$ divides $abc$ and is divisible by its own radical $N$, one has $N\le P\le abc$, which gives the three pairs of bounds. The two extreme presentations are obtained by substitution.
\end{proof}

The identity $fG_p=Y$, together with $G_p\ge 1$, shows that the largest exponent factor of a triple is its mass, attained at the presentation $P=N$. Szpiro's conjecture therefore admits the following form.

\begin{corollary}\label{cor:fbound}
Conjecture~\ref{conj:szpiro} holds if and only if, for every $\varepsilon>0$, there are only finitely many triples admitting a presentation with $f>3+\varepsilon$.
\end{corollary}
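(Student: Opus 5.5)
The plan is to reduce the corollary to Lemma~\ref{lem:szpiroY}, using the fact that for a fixed triple the largest exponent factor over all presentations is exactly $Y$. By Proposition~\ref{prop:dictionary}, every presentation $P$ of a triple satisfies $f\le Y$, since $fG_p=Y$ and $G_p\ge 1$. The presentation $P=N=\rad(abc)$ is always admissible, because $N\mid abc$ and $\rad(N)=N$, and it gives $f=Y$. Hence
\[
\max_{P}\, f(P)=Y,
\]
where the maximum runs over the finitely many presentations of the triple.

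Fix $\varepsilon>0$ and compare two sets of triples. A triple admits a presentation with $f>3+\varepsilon$ if and only if $Y>3+\varepsilon$. Indeed, if some $P$ has $f>3+\varepsilon$, then $Y\ge f>3+\varepsilon$. Conversely, if $Y>3+\varepsilon$, the presentation $P=N$ has $f=Y>3+\varepsilon$. The two sets therefore coincide for every $\varepsilon$, so one is finite exactly when the other is.

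By Lemma~\ref{lem:szpiroY}, Conjecture~\ref{conj:szpiro} is equivalent to the finiteness of the triples with $Y>3+\varepsilon$ for every $\varepsilon>0$. By the previous paragraph this is the same as the finiteness of the triples admitting a presentation with $f>3+\varepsilon$, which proves the corollary.

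No step is a real obstacle. The only point that needs care is the quantifier: "admitting a presentation" is existential, so the relevant quantity is the maximum of $f$ over presentations, and this maximum is attained at $P=N$. Each triple is counted once however many presentations it has, so finiteness of triples is the correct notion and no uniformity over presentations has to be checked separately.
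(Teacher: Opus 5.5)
Your proposal is correct and follows the paper's own proof: the bound $f\le Y$ from Proposition~\ref{prop:dictionary}, with equality at the always-admissible presentation $P=N$, makes the two sets of exceptional triples coincide for every $\varepsilon$, and Lemma~\ref{lem:szpiroY} then gives the equivalence. Your remark on the existential quantifier over presentations makes explicit a point the paper leaves implicit.
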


\begin{proof}
By Proposition~\ref{prop:dictionary}, every presentation satisfies $f\le Y$, with equality for $P=N$. The largest exponent factor over the presentations of a triple is therefore $Y$, and the claim follows from Lemma~\ref{lem:szpiroY}.
\end{proof}

The equivalence rests on the presentation $P=N$, which Definition~\ref{def:presentation} admits. A presentation with $P>N$ satisfies $f<Y$. Szpiro's conjecture therefore implies the corresponding bound on $f$ for restricted presentations, as in Objective~\ref{obj:maxf}. A converse for such a restriction would require a further argument.

\begin{example}[Surd presentations]\label{ex:surd}
Let $p/q$ be a positive convergent of an irrational surd $k^{1/s}$, with integers $k,s\ge 2$, and put $d=p^s-kq^s$. Assume $\gcd(p,k)=1$ and $pq>1$. Since $\gcd(p,q)=1$, the integers $|d|$, $kq^s$ and $p^s$ form a coprime triple in the appropriate order. The surd presentation of M\"uller and Taktikos is $P=|d|pkq$~\cite{MullerTaktikos2026}. It divides $abc=|d|kp^sq^s$ and has the same radical. Writing $A=\log(|d|k)$ and $B=\log(pq)$ gives
\begin{equation}\label{eq:surd-factor}
f=\frac{A+sB}{A+B}=1+(s-1)\frac{B}{A+B}.
\end{equation}
Thus $1<f<s$, and for fixed $s$ the factor approaches $s$ as $A/B$ tends to $0$. Its relation to the exponent $s$ depends on the relative logarithmic sizes of $|d|k$ and $pq$. The exponent $s$ of the presentation need not be the algebraic degree of $k^{1/s}$, since $x^s-k$ may be reducible, as for $4^{1/4}=\sqrt2$.

Reyssat's triple $2+3^{10}\cdot109=23^5$ arises from $23^5-109\cdot9^5=2$, with $s=5$ and $P=2\cdot23\cdot109\cdot9=45126$. One obtains
\[
f\approx 2.9903,\qquad G_a\approx 1.4628,\qquad G_p\approx 1.1142
\]
in agreement with the values in~\cite{Muller2026Superelliptic,deWeger2026}.
\end{example}

\begin{remark}\label{rem:deWegerFamily}
The real enhanced radicals of de Weger are presentations in the sense of Definition~\ref{def:presentation}. For an integer $d\ge 2$, write $b=\beta^d b'$ and $c=\gamma^d c'$ with $\beta$ and $\gamma$ as large as possible. The enhanced radical $\operatorname{rrad}_d=a\beta b'\gamma c'$ divides $abc$ and is divisible by every prime factor of $abc$, and the approximation and power gains of degree $d$ defined in~\cite{deWeger2026} are the gains $G_a$ and $G_p$ of this presentation~\cite[Section~2]{deWeger2026}.

De Weger tabulates, for Reyssat's triple, a family of presentations indexed by a degree $d$~\cite[Table~1]{deWeger2026}. The degree $d=5$ gives the surd presentation of Example~\ref{ex:surd}. For $d\ge 11$ the tabulated presentation is $P=abc$, and the tabulated values $0.48918$ and $3.33188$ are $1/\lambda$ and $Y$, in accordance with Proposition~\ref{prop:dictionary}. As the presentation varies, the gains of one triple move along the hyperbola $G_aG_p=X$, over most of the range allowed by that proposition (Figure~\ref{fig:planes}).
\end{remark}

\begin{example}[Pythagorean triples]\label{ex:pythagorean}
A primitive Pythagorean triple $x^2+y^2=z^2$, with $x$ and $y$ interchanged if necessary so that $x<y$, gives the triple $(x^2,y^2,z^2)$ and the presentation $P=xyz$. Here $f=2$ exactly, and $G_a=2/\lambda$ lies in the interval $(2/3,1)$ by Lemma~\ref{lem:lambda}. The triple $18424^2+36207^2=40625^2$ gives
\[
X\approx 1.2659,\quad Y\approx 3.6896,\quad \lambda\approx 2.9146,\quad G_a\approx 0.6862,\quad G_p\approx 1.8448.
\]
\end{example}

\section{The two gain bounds}\label{sec:gainbounds}

Fix a rule assigning to each triple $(a,b,c)$ a presentation $P(a,b,c)$, and read the gains of each triple relative to this presentation. We consider the two statements below.

\begin{enumerate}
\item[(A)] For every $\varepsilon>0$, there are only finitely many triples with $G_a>3/2+\varepsilon$.
\item[(B)] For every $\varepsilon>0$, there are only finitely many triples with $G_p>3+\varepsilon$.
\end{enumerate}

Statement \textup{(A)} is an asymptotic version of the proposed approximation bound for surd presentations~\cite{MullerTaktikos2026}. For the real gains of de Weger, the numerical motivation for the pair concerns the presentation minimizing $\operatorname{rrad}_d$ over $d\ge 2$, which maximizes $G_a$ and minimizes $G_p$~\cite[Definition~4 and Section~2.4]{deWeger2026}. We study \textup{(A)} and \textup{(B)} as hypotheses for the chosen presentations.

\begin{proposition}\label{prop:gainbounds}
Assume \textup{(A)} and \textup{(B)} for a fixed rule of presentations. Then, for every $\varepsilon>0$, there are only finitely many triples with $X>9/2+\varepsilon$, and only finitely many with $Y>27/2+\varepsilon$.
\end{proposition}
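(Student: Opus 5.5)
The plan is to combine the multiplicative identities of Proposition~\ref{prop:dictionary} with the convention of Section~\ref{sec:coordinates}, under which (A) and (B) read $\limsup_{N\to\infty}G_a\le 3/2$ and $\limsup_{N\to\infty}G_p\le 3$, and then to multiply the bounds. For the quality, the identity $X=G_aG_p$ holds for every triple relative to its chosen presentation $P(a,b,c)$. Both factors are positive: $G_p\ge 1$ by Proposition~\ref{prop:dictionary}, and $G_a=\log c/\log P>0$ since $c\ge 3$ and $P\ge N\ge 6$. For nonnegative quantities with finite upper limits one has $\limsup(G_aG_p)\le\limsup G_a\cdot\limsup G_p$. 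This gives $\limsup X\le (3/2)\cdot 3=9/2$, which by the convention is the first assertion.

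To work at the level of finite sets instead, I will make the $\varepsilon$ explicit. Given $\varepsilon>0$, choose $\delta>0$ with $(3/2+\delta)(3+\delta)\le 9/2+\varepsilon$. Apply (A) and (B) with $\delta$; only finitely many triples violate either bound. Every other triple satisfies
\[
X=G_aG_p\le(3/2+\delta)(3+\delta)\le 9/2+\varepsilon.
\]

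For the mass, I will use $Y=\lambda X$ from \eqref{eq:YlambdaX} and the bound $\lambda<3$ of Lemma~\ref{lem:lambda}, valid for every triple. Outside the same finite exceptional set this gives $Y<3X\le 3(9/2+\varepsilon)$, and replacing $\varepsilon$ by $\varepsilon/3$ yields the bound $27/2+\varepsilon$. An equivalent route is through $Y=fG_p$ with $f=\lambda G_a<3G_a$, which again gives $f\le 9/2+O(\delta)$ and hence $Y\le 27/2+O(\delta)$.

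I do not expect a real obstacle. The only points needing care are that both gains are positive, so that upper bounds on the factors multiply, and that the exceptional sets from (A) and (B) are finite and so their union is finite. Both are already settled by Proposition~\ref{prop:dictionary} and the remarks after Definition~\ref{def:coordinates}. I also note that the factor $3$ in the second bound comes from $\limsup\lambda\le 3$, which is sharp by Lemma~\ref{lem:lambda}. So no better constant should be expected from this argument without a joint bound on the gains.
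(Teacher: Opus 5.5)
Your proposal is correct and follows essentially the paper's argument. Both proofs use $X=G_aG_p$ and $Y=\lambda X$ with $\lambda<3$ for every triple, together with a choice of $\delta$ so that, outside the finite exceptional set from (A) and (B), the products $(3/2+\delta)(3+\delta)$ and $3(3/2+\delta)(3+\delta)$ stay below $9/2+\varepsilon$ and $27/2+\varepsilon$; the only cosmetic difference is that the paper picks one $\delta$ with $3(3/2+\delta)(3+\delta)<27/2+\varepsilon$, which covers both bounds at once, where you rescale $\varepsilon$ by $3$.
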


\begin{proof}
By~\eqref{eq:dictionary}, one has $X=G_aG_p$ and $Y=\lambda G_aG_p$, and Lemma~\ref{lem:lambda} gives $\lambda<3$ for every triple. Given $\varepsilon>0$, choose $\delta>0$ with $3(3/2+\delta)(3+\delta)<27/2+\varepsilon$, which also gives $(3/2+\delta)(3+\delta)<9/2+\varepsilon$. By \textup{(A)} and \textup{(B)}, all triples outside a finite set satisfy $G_a\le 3/2+\delta$ and $G_p\le 3+\delta$. For these, $X<9/2+\varepsilon$ and $Y<27/2+\varepsilon$.
\end{proof}

\begin{remark}\label{rem:plane}
The strict inequalities at the thresholds of \textup{(A)} and \textup{(B)} take the form
\[
G_a<\tfrac32\ \Longleftrightarrow\ Y>\tfrac23 fX\qquad\text{and}\qquad G_p<3\ \Longleftrightarrow\ Y<3f.
\]
For fixed $f$, their closure in the non-negative quadrant is the triangle with vertices $(0,0)$, $(0,3f)$ and $(9/2,3f)$. In the coordinate $Y/f=G_p$, they become $\tfrac23 X<G_p<3$, and eliminating $G_p$ gives $X<9/2$. Equivalently, since $Y=\lambda X$ and $f=\lambda G_a$, they confine the exponent factor to the interval
\[
\frac{\lambda X}{3}<f<\frac{3\lambda}{2}
\]
which is non-empty only when $X<9/2$. This triangle describes the inequalities before the further restrictions of Proposition~\ref{prop:dictionary} and the arithmetic of presentations are imposed. Statements \textup{(A)} and \textup{(B)} allow finitely many exceptions at every positive tolerance, and their asymptotic consequence is Proposition~\ref{prop:gainbounds}.
\end{remark}

\begin{remark}\label{rem:loss}
Multiplying the constants in \textup{(A)} and \textup{(B)} gives $9/2$, whereas Conjecture~\ref{conj:abc} asks for $1$. The resulting mass bound $27/2$ likewise exceeds the threshold $3$ in Lemma~\ref{lem:szpiroY}. These comparisons describe the limits of the separate estimates. Stronger consequences may follow from the chosen presentation, as in Remark~\ref{rem:presentation}.

More generally, bounds $G_a\le C_a+\varepsilon$ and $G_p\le C_p+\varepsilon$, each with finitely many exceptions for every $\varepsilon>0$, that is, the asymptotic bounds $\limsup G_a\le C_a$ and $\limsup G_p\le C_p$ as $N\to\infty$, give $\limsup X\le C_aC_p$ as $N\to\infty$. This product estimate yields Conjecture~\ref{conj:abc} when $C_aC_p\le 1$. Since $G_p\ge 1$, such constants satisfy $C_p\ge 1$ and hence $C_a\le 1$. Individual values above these constants remain compatible with an asymptotic bound. For comparison, the largest real approximation gain computed by de Weger is $1.4628$, attained by Reyssat's triple~\cite[Section~2.4]{deWeger2026}.

The constant $3/2$ in \textup{(A)} also has a direct reading in the plane. Every presentation satisfies $G_a\le X$ by Proposition~\ref{prop:dictionary}. Conjecture~\ref{conj:abc} therefore gives $\limsup G_a\le 1$, and Conjecture~\ref{conj:szpiro} gives $\limsup G_a\le 3/2$ by Proposition~\ref{prop:implications}\textup{(ii)}, whatever the choice of presentations. Statement \textup{(A)} is thus a consequence of Szpiro's conjecture for Frey curves.
\end{remark}

\begin{remark}\label{rem:presentation}
The strength of the pair \textup{(A)} and \textup{(B)} depends on the presentation. For $P=abc$, statement \textup{(A)} holds since $G_a=1/\lambda<1$, and \textup{(B)} is equivalent to Conjecture~\ref{conj:szpiro} by Lemma~\ref{lem:szpiroY}. For $P=N$, statement \textup{(B)} holds since $G_p=1$, and \textup{(A)} is the bound $\limsup X\le 3/2$ of Proposition~\ref{prop:implications}\textup{(ii)}. Any conjecture about the gains must therefore specify its presentation.

At both extremes, the pair gives $\limsup X\le 3/2$. For $P=N$ this follows directly from $G_a=X$. For $P=abc$ it follows from $X=Y/\lambda$ and $\liminf\lambda\ge 2$. The general constant $9/2$ in Proposition~\ref{prop:gainbounds} comes from multiplying the separate bounds without these further identities. Its attainment by triples is not asserted.
\end{remark}

\section{Numerical data}\label{sec:data}

Table~\ref{tab:quality} lists the five highest-quality triples in the cited tables of de Smit~\cite{deSmitABC,BrowkinBrzezinski1994,deWeger1987}, together with Nitaj's triple of mass $4.4190\ldots$ discussed in~\cite{Nitaj1993,Dokchitser2004}. Reyssat's triple is discussed in~\cite{deWeger2026}. The values in Tables~\ref{tab:quality}, \ref{tab:reyssat} and~\ref{tab:balanced} are computed directly from the definitions with forty significant digits and rounded to four decimal places. Additional digits appear where a threshold or a plotted point calls for them.

\begin{table}[ht]
\centering
\begin{tabular}{llrrr}
\toprule
Triple & Found by & $X$ & $Y$ & $\lambda$\\
\midrule
$2+3^{10}\cdot109=23^5$ & Reyssat & $1.6299$ & $3.3319$ & $2.0442$\\
$11^2+3^2\cdot5^6\cdot7^3=2^{21}\cdot23$ & de Weger & $1.6260$ & $3.6928$ & $2.2711$\\
$19\cdot1307+7\cdot29^2\cdot31^8=2^8\cdot3^{22}\cdot5^4$ & Browkin, Brzezi\'nski & $1.6235$ & $3.7014$ & $2.2799$\\
$283+5^{11}\cdot13^2=2^8\cdot3^8\cdot17^3$ & Browkin, Brzezi\'nski, Nitaj & $1.5808$ & $3.5523$ & $2.2472$\\
$1+2\cdot3^7=5^4\cdot7$ & de Weger & $1.5679$ & $3.1357$ & $2.0000$\\
\midrule
$13\cdot19^6+2^{30}\cdot5=3^{13}\cdot11^2\cdot31$ & Nitaj & $1.5270$ & $4.4190$ & $2.8939$\\
\bottomrule
\end{tabular}

\smallskip
\footnotesize The balance of $1+2\cdot3^7=5^4\cdot7$ is $1.99997\ldots$, strictly below $2$, in accordance with Lemma~\ref{lem:lambda}. The mass of $11^2+3^2\cdot5^6\cdot7^3=2^{21}\cdot23$ is $3.6927502\ldots$, which lies within $3\cdot10^{-7}$ of a rounding boundary.
\medskip
\caption{Quality, mass and balance for extremal triples.}\label{tab:quality}
\end{table}

The orderings by quality and by mass differ. Reyssat's triple, first by quality, comes fourth of these five by mass. Nitaj's triple has lower quality than all five, and its mass is larger because its balance is close to $3$, as~\eqref{eq:YlambdaX} leads one to expect.

Table~\ref{tab:reyssat} gives the gains of Reyssat's triple under three presentations, and Figure~\ref{fig:planes} places these data in the two planes. The surd presentation lies strictly inside the range of Proposition~\ref{prop:dictionary}.

\begin{table}[ht]
\centering
\begin{tabular}{lrrrr}
\toprule
Presentation & $P$ & $f$ & $G_a$ & $G_p$\\
\midrule
$P=N$ & $15042$ & $3.3319$ & $1.6299$ & $1.0000$\\
surd, $s=5$ & $45126$ & $2.9903$ & $1.4628$ & $1.1142$\\
$P=abc$ & $2\cdot3^{10}\cdot23^5\cdot109$ & $1.0000$ & $0.4892$ & $3.3319$\\
\bottomrule
\end{tabular}
\medskip
\caption{Gains of Reyssat's triple under three presentations.}\label{tab:reyssat}
\end{table}

\begin{figure}[!htbp]
\centering
\begin{tikzpicture}
\begin{axis}[
  width=0.49\textwidth, height=0.49\textwidth,
  xmin=0, xmax=2.1, ymin=0, ymax=5,
  axis lines=left, xlabel={$X$}, ylabel={$Y$},
  tick label style={font=\scriptsize}, label style={font=\small},
  xtick={0,0.5,1,1.5}, ytick={0,1,2,3,4,5}]
\addplot[name path=lower, draw=gray, domain=0:2.1] {2*x};
\addplot[name path=upper, draw=gray, domain=0:2.1] {3*x};
\addplot[gray!15] fill between[of=lower and upper];
\addplot[dashed, thick] coordinates {(1,0) (1,5)};
\addplot[dotted, thick] coordinates {(1.5,0) (1.5,5)};
\addplot[dashed, thick] coordinates {(0,3) (2.1,3)};
\addplot[only marks, mark=*, mark size=1.4pt] coordinates {
  (1.6299,3.3319) (1.6260,3.6928) (1.6235,3.7014)
  (1.5808,3.5523) (1.5679,3.1357) (1.5270,4.4190)};
\node[font=\scriptsize, anchor=west] at (axis cs:1.66,3.2) {Reyssat};
\node[font=\scriptsize, anchor=east] at (axis cs:1.47,4.42) {Nitaj};
\node[font=\scriptsize, anchor=south east] at (axis cs:0.98,0.1) {$X=1$};
\node[font=\scriptsize, anchor=south west] at (axis cs:1.52,0.1) {$X=3/2$};
\node[font=\scriptsize, anchor=south west] at (axis cs:0.05,3.02) {$Y=3$};
\node[font=\scriptsize] at (axis cs:0.55,1.35) {$\lambda\in[2,3]$};
\end{axis}
\end{tikzpicture}\hfill
\begin{tikzpicture}
\begin{axis}[
  width=0.49\textwidth, height=0.49\textwidth,
  xmin=0, xmax=2.1, ymin=0, ymax=3.6, clip=false,
  axis lines=left, xlabel={$G_a$}, ylabel={$G_p$},
  tick label style={font=\scriptsize}, label style={font=\small},
  xtick={0,0.5,1,1.5}, ytick={0,1,2,3}]
\addplot[dashed, thick] coordinates {(0,3) (1.5,3) (1.5,0)};
\addplot[domain=0.28:1.95, samples=120] {1/x};
\addplot[domain=0.46:1.95, samples=120, densely dotted, thick] {1.63/x};
\addplot[domain=1.25:1.95, samples=60, dash dot, thick] {4.5/x};
\addplot[only marks, mark=o, mark size=1.4pt] coordinates {
  (0.77289,2.10883) (0.81715,1.99461) (0.97679,1.66863)
  (0.59037,2.76079) (0.61585,2.64658) (0.64363,2.53236)
  (0.67403,2.41814) (0.70744,2.30392) (0.48918,3.33188)
  (1.62991,1.00000)};
\addplot[only marks, mark=*, mark size=1.6pt] coordinates {(1.46283,1.11421)};
\addplot[only marks, mark=square*, mark size=1.4pt] coordinates {(0.6862,1.8448)};
\addplot[only marks, mark=triangle*, mark size=1.8pt] coordinates {(0.45020,2.94376)};
\addplot[only marks, mark=x, mark size=3pt, thick] coordinates {(1.5,3)};
\node[font=\scriptsize, anchor=west] at (axis cs:1.96,2.31) {$9/2$};
\node[font=\scriptsize, anchor=west] at (axis cs:1.96,0.51) {$1$};
\node[font=\scriptsize, anchor=west] at (axis cs:1.96,0.83) {$1.63$};
\node[font=\scriptsize, anchor=west] at (axis cs:1.47,1.28) {surd};
\end{axis}
\end{tikzpicture}
\caption{Left, the quality plane, with the cone $2X\le Y\le 3X$ of Lemma~\ref{lem:lambda}, which contains every triple with $a\ge2$, the $abc$ threshold $X=1$, the threshold $X=3/2$ of Proposition~\ref{prop:implications}\textup{(ii)}, the Szpiro threshold $Y=3$, and the six triples of Table~\ref{tab:quality}. Right, the gain plane, with the threshold box of statements \textup{(A)} and \textup{(B)}, and the hyperbolas $G_aG_p=1$, $G_aG_p=1.63$ (dotted) and $G_aG_p=9/2$ (dash-dotted, through the corner of the box), each labelled at its right end by the value of the product. The circles are the presentations of Reyssat's triple from~\cite[Table~1]{deWeger2026} together with $P=N$, the filled disc its surd presentation, the square the Pythagorean triple of Example~\ref{ex:pythagorean}, the triangle the triple attaining the largest real power gain in the computations of de Weger~\cite{deWeger2026}, and the cross the corner $(3/2,3)$ of the box.}\label{fig:planes}
\end{figure}

Figure~\ref{fig:weakquality} shows the quality and the weak quality $Y$ for a sample of $abc$-triples supplied by de Weger (private communication). The points with $a\ge 2$ lie in the cone $2X\le Y\le 3X$ of Lemma~\ref{lem:lambda}. Points with $a=1$ lie just below its lower edge when $c$ is large. The highest point shown is Nitaj's triple of Table~\ref{tab:quality}. Table~\ref{tab:balanced} lists the ten triples of largest balance in the supplied sample. Their proximity to $Y=3X$ reflects the logarithmic relation~\eqref{eq:balance-scale}, which allows $a/c\to 0$ while the balance tends to $3$. For these ten triples, the quotient $c/a$ lies between $2$ and $7.7$, so that the three terms also have comparable size, and the weak quality exceeds $4$ while the quality stays below $1.42$.

\begin{figure}[!htbp]
\centering
\includegraphics[width=0.85\textwidth]{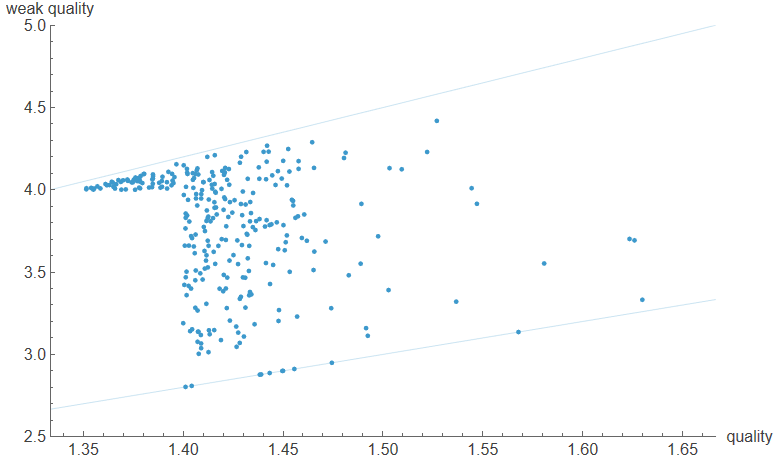}
\caption{Quality $X$ and weak quality $Y$ for a sample of $abc$-triples supplied by de Weger. The lines $Y=2X$ and $Y=3X$ bound the cone for $a\ge 2$. Triples with $a=1$ lie below its lower edge.}\label{fig:weakquality}
\end{figure}

\begin{table}[!htbp]
\centering
{\footnotesize
\begin{tabular}{lrrrr}
\toprule
Triple & $X$ & $Y$ & $\lambda$ & $c/a$\\
\midrule
$2^{14}\cdot 11^{8}\cdot 29^{3}\cdot 1123\cdot 3631^{2}+3^{8}\cdot 17^{4}\cdot 43^{2}\cdot 163^{7}=7^{28}\cdot 9491$ & $1.3966$ & $4.1552$ & $2.9752$ & $3.4425$\\
$2^{37}\cdot 3^{12}\cdot 9109^{3}+5^{13}\cdot 13^{15}\cdot 2939=7^{23}\cdot 11\cdot 793345871$ & $1.4121$ & $4.2002$ & $2.9745$ & $4.3265$\\
$2^{7}\cdot 5^{4}\cdot 7^{22}+19^{4}\cdot 37\cdot 47^{4}\cdot 53^{6}=3^{14}\cdot 11\cdot 13^{9}\cdot 191\cdot 7829$ & $1.4158$ & $4.2102$ & $2.9737$ & $2.6673$\\
$5^{13}\cdot 7^{12}\cdot 11^{8}\cdot 13+2^{13}\cdot 3^{30}\cdot 113\cdot 2297\cdot 712573=17^{3}\cdot 19^{11}\cdot 43^{3}\cdot 53^{4}$ & $1.3676$ & $4.0590$ & $2.9681$ & $7.6256$\\
$5^{6}\cdot 257^{10}+3^{3}\cdot 7\cdot 11^{6}\cdot 31\cdot 41^{5}\cdot 113^{2}\cdot 8342669=2^{77}\cdot 17^{3}\cdot 199$ & $1.3642$ & $4.0489$ & $2.9678$ & $7.5224$\\
$17^{6}\cdot 23\cdot 59^{4}\cdot 449+3^{16}\cdot 5^{10}\cdot 97^{2}=2^{31}\cdot 1481^{3}$ & $1.3805$ & $4.0968$ & $2.9676$ & $2.3095$\\
$2^{16}\cdot 5^{3}\cdot 7^{9}\cdot 11^{5}\cdot 23\cdot 479\cdot 997+3^{17}\cdot 17^{5}\cdot 19^{2}\cdot 61^{6}=13\cdot 103^{11}\cdot 149^{2}$ & $1.3802$ & $4.0953$ & $2.9673$ & $6.8317$\\
$13^{4}\cdot 17\cdot 227^{2}\cdot 257^{3}+2^{12}\cdot 3^{20}\cdot 31^{3}=5^{19}\cdot 29^{2}\cdot 53$ & $1.3516$ & $4.0094$ & $2.9664$ & $2.0018$\\
$23^{5}\cdot 43^{5}\cdot 397+3^{33}\cdot 5^{3}=2^{27}\cdot 7\cdot 29\cdot 73^{3}\cdot 101$ & $1.3611$ & $4.0348$ & $2.9644$ & $2.8499$\\
$5^{9}\cdot 7^{2}\cdot 11^{8}+2^{5}\cdot 19^{9}\cdot 2819=3\cdot 13^{2}\cdot 31^{5}\cdot 43^{4}$ & $1.3539$ & $4.0119$ & $2.9631$ & $2.4189$\\
\bottomrule
\end{tabular}}
\medskip
\caption{The ten triples of largest balance in the sample supplied by de Weger, ordered by decreasing balance.}\label{tab:balanced}
\end{table}

\section{Research objectives}\label{sec:objectives}

\begin{objective}\label{obj:maxf}
What is the largest exponent factor $f$ among the known triples, for surd presentations or for the real enhanced radical of de Weger minimized over the degree? For the latter, put $m_\ell=v_\ell(bc)$ and $M=\max_{\ell\mid bc}m_\ell$. Then the minimizing presentation is
\begin{equation}\label{eq:optimal-real-presentation}
P_*=\min_{2\le d\le\max\{2,M\}}\Bigl(a\prod_{\ell\mid bc}\ell^{\lfloor m_\ell/d\rfloor+(m_\ell\bmod d)}\Bigr).
\end{equation}
Here $\ell$ runs through the prime divisors of $bc$, and $m_\ell\bmod d$ denotes the least non-negative remainder. The exponent in the product follows from extracting the largest $d$th power, while every degree above $M$ gives $abc$, so the search is finite for each triple. Among the triples in Tables~\ref{tab:quality} and~\ref{tab:balanced}, the largest value of $\log(abc)/\log P_*$ is $2.9903\ldots$, attained by Reyssat's triple at degree $5$.

The restriction on presentations is essential to this question. Over all presentations, the maximum is $Y$ by Proposition~\ref{prop:dictionary}. Under Conjecture~\ref{conj:szpiro}, Corollary~\ref{cor:fbound} bounds $f$ asymptotically by $3$ for every presentation. For surd presentations, equation~\eqref{eq:surd-factor} gives
\begin{equation}\label{eq:surd-asymptotic-share}
\frac{\log(pq)}{\log P}\le\frac{2+o(1)}{s-1}
\end{equation}
as $N\to\infty$, uniformly in $s$. This restricts the surd presentations with exponent $s\ge 4$. For Reyssat's triple, with $s=5$, the ratio is $0.4976$ against $2/(s-1)=0.5$.
\end{objective}

\begin{objective}\label{obj:jointregion}
For a fixed type of presentation, what joint bound on $(G_a,G_p)$ is supported by the known triples? A non-increasing empirical envelope $G_p\le\varphi(G_a)$ would permit a comparison with the separate bounds $G_a<3/2$ and $G_p<3$. For an asymptotic conclusion, one would need, for every $\varepsilon>0$, a joint bound giving $G_aG_p\le 1+\varepsilon$ outside finitely many triples. By~\eqref{eq:dictionary}, this is equivalent to Conjecture~\ref{conj:abc}. The triples in Table~\ref{tab:quality} have quality below $1.63$, so their presentations lie below $G_aG_p=1.63$, far from the corner $(3/2,3)$ where the product is $9/2$.
\end{objective}

\section*{Acknowledgments}\label{sec:acknowledgments}

The authors thank Benne de Weger, Ingo Alth\"ofer and Abderrahmane Nitaj for helpful ideas, calculations and discussions.

\bibliographystyle{smfalpha}
\bibliography{quality_plane}

\end{document}